\documentclass[11pt,reqno]{amsart}
\usepackage{amsmath}
\usepackage{geometry}
\usepackage{graphicx}
\usepackage{subfigure}
\usepackage{color}
\theoremstyle{plain}
\newtheorem{theorem}{Theorem}[section]
\newtheorem{lemma}[theorem]{Lemma}

\newtheorem{corollary}[theorem]{Corollary}
\theoremstyle{definition}
\newtheorem{definition}[theorem]{Definition}
\theoremstyle{remark}

\newcommand{\R}{{\mathbb R}}

\title{$n$-digit Benford distributed random variables}
\author{Azar Khosravani}
\address{Department of Science and Mathematics, Columbia College Chicago,
Chicago, IL 60605}
\email{akhosravani@colum.edu}

\author{Constantin Rasinariu}
\address{Department of Science and Mathematics, Columbia College Chicago,
Chicago, IL 60605}
\email{crasinariu@colum.edu}


\begin{document}

\begin{abstract}

The scope of this paper is twofold. First, to emphasize the use of the mod 1 map in exploring the digit distribution of random variables. We show that the well-known base- and scale-invariance of Benford variables are consequences of their associated mod 1 density functions being uniformly distributed. Second, to introduce a new concept of the $n$-digit Benford variable. Such a variable is Benford in the first $n$ digits, but it is not guaranteed to have a logarithmic distribution beyond the $n$-th digit. We conclude the paper by giving a general construction method for $n$-digit Benford variables, and provide a concrete example.
\end{abstract}

\subjclass{11Kxx (primary), 60Exx (secondary)}
\keywords{Benford's law, random variables, mod 1 map, scale-invariance, base-invariance}

\maketitle

\section{Introduction}
In 1881, Newcomb \cite{Newcomb-1881} noticed that the first digit distribution of numerical data is not uniform but rather logarithmic. He had observed that the pages of logarithm tables were more worn out for smaller digits such as $1$ and $2$ than for larger ones, and concluded that ``[the] law of probability of the occurrence of the numbers is such that all mantissae of their logarithms are equally probable.'' He explicitly tabulated the probability of occurrence of the first and second digits. Apparently unaware of Newcomb's results, in 1938, Benford \cite{Benford-1938} found the same first digit phenomenon, and explicitly gave the formula for the probability of a number having the first digit $d$,
\begin{equation}
\label{eq:ben}
P(d)=\log\left(1+\frac{1}{d}\right), \quad d=1,2,\ldots,9~,
\end{equation}
where $\log$ is used to represent the base $10$ logarithm. He gathered  empirical evidence for formula (\ref{eq:ben}) by collecting thousands of numbers from diverse datasets, such as the area of the riverbeds, atomic weights of elements, etc.

It has been shown (see for example \cite{Hill-2011}) that the only non-trivial digit distribution that is left invariant under scale change of the underlying distribution is the Benford distribution. Scale-invariance means that a collection of data has the same digit distribution when multiplied by a constant. In his seminal paper \cite{Hill-1995}, Hill showed  that the appropriate domain for the significant digit probability is the smallest collection of positive real subsets that contains all the infinite sets of the form $\bigcup_{n=-\infty}^{\infty}[a,b) \cdot 10^n$. This set denoted by $\mathcal A$, is the smallest sigma algebra generated by $D_1$, $D_2$, \ldots, where $D_i$ is the $i$-th significant-digit function. $D_1:\mathbb{R}^{\,+}\to \{1,\ldots,9 \}$ and $D_i:\mathbb{R}^{\,+} \to \{0,1,\ldots,9 \}$ for $i \neq 1$. For example, $D_1(2.718)=2$ and $D_2(2.718)=7$. Observe that $D_i^{-1}(d) \in \mathcal A$ for all $i$ and $d$.
Within this framework, for a random variable $Y$, the Benford's first digit distribution law can be stated as
\begin{equation}
\label{e:ben}
P(D_1(Y) = d)=\log \left(1+d^{-1}\right).
\end{equation}
In general \cite{Hill-2011}, a random variable is Benford if for all $m \in \mathbb{N}$, all $d_1\in \{1,\ldots,9 \}$ and all $d_i\in \{0,1,\ldots,9 \}$ for $i>1$,
\begin{equation}
\label{e:benn}
P(D_i(Y) = d_i\text{ for }i=1,2,\ldots, m)=\log \bigg(1+ \Big(\sum_{j=1}^{m} 10^{m-j}d_j\Big)^{-1}\bigg)~.
\end{equation}

\noindent
For example, the probability of having digits $8$, $4$ and $7$ as the first, second and third significant digits, respectively, is
\begin{equation}
P(8,4,7) = \log\left(1+\frac{1}{847} \right) \simeq 0.000512~.
\end{equation}

\noindent
Hill defined base-invariance, and proved that base-invariance as well as scale-invariance imply Benford distribution of digits. Leemis et. al \cite{Leemis-2000} investigated several examples of symmetric and non-symmetric distributions that lead to Benford distributed random variables.

This paper is organized as follows. In Section \ref{mod1}, we use the mod 1 map to show that the well-known base- and scale-invariance of Benford distributed random variables are consequences of $g^{\dagger}=1$, where $g^{\dagger}$ is the associated \mbox{mod 1} density function. In Section \ref{nben}, we introduce the concept of $n$-digit Benford distributed random variables which are guaranteed to obey the log distribution in their first $n$ digits, and give a general construction method for such variables. Unless otherwise specified, throughout this paper, we assume that the base is $10$.

\section{The mod 1 map}\label{mod1}

Any positive real number $y$ can be written as $y=m\times 10^k$ for some $k \in \mathbb{Z}$, where $1\le m<10$. Then the first digits of $y$ and $m$ are the same: $D_1(y)=D_1(m)$. Let us assume that $y$ is given by a random variable $Y$ with the density function $f$. Let $X=\log Y$ be the random variable with the density function $g$.

\begin{definition}
For any real function $g:\R \to \R$ , we define $g^{\dagger}=g~
 \text{(mod 1)}$ as
$$
g^{\dagger}(x) = \begin{cases}
\sum_{k=-\infty}^{\infty}g(x+k), & \forall x\in [0,1) ,\\
0, ~&\textrm{otherwise.}
\end{cases}
$$
\end{definition}

\begin{lemma}\label{l:one}
\label{dagger}
The probability of $Y\!$  having its first digit $d$ is
\begin{equation*}
P\left(D_1(Y)=d\right) = \int_{\log(d)}^{\log(d+1)} g^{\dagger}(x)\, dx~.
\end{equation*}
\end{lemma}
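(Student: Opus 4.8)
The plan is to translate the event $\{D_1(Y)=d\}$ into a statement about $X=\log Y$ taken mod $1$, and then simply unfold the periodization built into the definition of $g^{\dagger}$.

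First I would record the elementary equivalence underlying everything. For $d\in\{1,\ldots,9\}$ one has $D_1(Y)=d$ precisely when $d\cdot 10^{k}\le Y<(d+1)\cdot 10^{k}$ for some integer $k$, which after taking base-$10$ logarithms reads $\log d\le X-k<\log(d+1)$. Since $0\le\log d<\log(d+1)\le 1$, the requirement $X-k\in[\log d,\log(d+1))$ forces $k=\lfloor X\rfloor$, so this integer is unique and the event $\{D_1(Y)=d\}$ is exactly $\{X\bmod 1\in[\log d,\log(d+1))\}$. Consequently I can decompose according to the integer part of $X$:
$$
P\big(D_1(Y)=d\big)=\sum_{k=-\infty}^{\infty}P\big(X\in[\log d+k,\ \log(d+1)+k)\big)=\sum_{k=-\infty}^{\infty}\int_{\log d+k}^{\log(d+1)+k}g(x)\,dx,
$$
the pieces being disjoint and exhaustive by the uniqueness of $k$.

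To finish, I would perform the change of variables $x\mapsto x+k$ in the $k$-th integral, turning the right-hand side into $\sum_{k}\int_{\log d}^{\log(d+1)}g(x+k)\,dx$; because $g\ge 0$, Tonelli's theorem permits interchanging the sum and the integral to get $\int_{\log d}^{\log(d+1)}\big(\sum_{k}g(x+k)\big)\,dx$. Since the interval of integration lies in $[0,1)$, the bracketed sum is exactly $g^{\dagger}(x)$ by definition, giving the claimed identity. The one step I would treat most carefully is the first: confirming that $g$ really is a density (it is, since $x\mapsto 10^{x}$ is a smooth increasing bijection $\R\to\R^{+}$, so $g(x)=f(10^{x})\,10^{x}\ln 10$) and that the witnessing exponent $k$ is genuinely unique, so that the decomposition above is a legitimate countable disjoint union; everything after that is bookkeeping plus a routine appeal to Tonelli (or monotone convergence) for the interchange.
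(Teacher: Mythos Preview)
Your proof is correct and follows essentially the same route as the paper: translate $\{D_1(Y)=d\}$ into $X\bmod 1\in[\log d,\log(d+1))$, decompose over the integer $k$, and collapse the resulting sum into the periodization $g^{\dagger}$. You are simply more explicit than the paper about the uniqueness of $k$, the change of variables, and the Tonelli justification for interchanging sum and integral, all of which the paper leaves implicit.
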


\begin{proof}
Let us consider the real numbers starting with the digit $d$, i.e., $D_1(y)=d$. These numbers belong to the set
$$
S\,=\bigcup_{k=-\infty}^{\infty}[\,d\times 10^k,\,(d+1)\times 10^k).
$$
Now consider the random variable $X=\log Y$ with the density function $g$. The logarithmic function maps $S$ into $\bigcup_{k=-\infty}^{\infty}[\,\log d+k,\,\log (d+1)+k) $. This set modulo $1$ is just $[\log d,\,\log (d+1))$ as illustrated in Figure (\ref{f:mod1}) for the case $d=2$.
\begin{figure}[htbp]
\begin{center}
\includegraphics[width=0.9\textwidth]{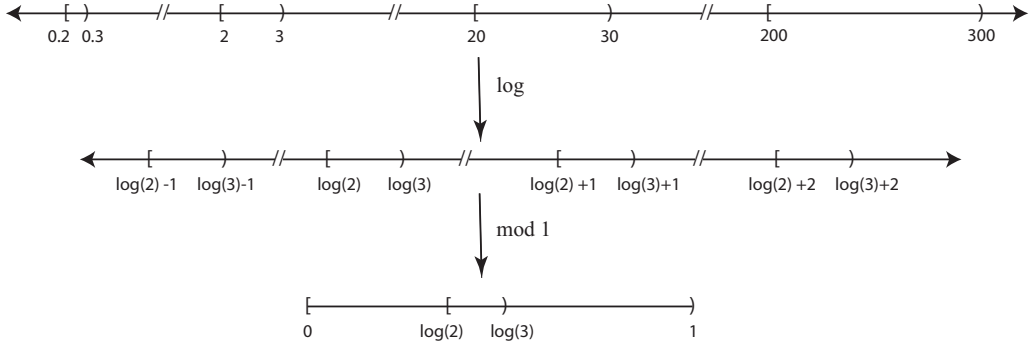}
\caption{The modulo 1 set of logarithms of positive numbers starting with $2$.}
\label{f:mod1}
\end{center}
\end{figure}

\noindent
Consequently
\begin{equation*}
\begin{split}
P\left(D_1(Y)=d\right)  &= \sum_{k=-\infty}^{\infty} P\left( d\times 10^k \le Y < (d+1)\times 10^k \right) \\
&= \sum_{k=-\infty}^{\infty}P\left(\log d +k \le X < \log (d+1) +k \right) \\
&= P\left(\log d \le X (\textrm{mod}\ 1) < \log (d+1)\right)\\
&=\int_{\log d}^{\log (d+1)} g^{\dagger}(x)\, dx~.
\end{split}
\end{equation*}

\end{proof}

Clearly, if $g^{\dagger}=1$, then one obtains Benford's law $P\left(D_1(Y)=d\right) = \log(1+1/d)$.  The uniformity of $g^{\dagger}$ resonates with Newcomb's pioneering observation in 1881 \cite{Newcomb-1881}  that the ``probability of the occurrence of the numbers is such that all mantissae of their logarithms are equally probable.''

Lemma \ref{l:one} can be readily generalized for the probability of a sequence of prescribed digits $d_1, d_2, \ldots, d_n$, where $d_1 \in \{1,2,\ldots,9\}$ and $d_i \in \{0,1,\ldots,9\}$ for $i > 1$:
\begin{equation}
\label{e:five}
P\Big(D_1(Y) = d_1,D_2(Y) = d_2,\ldots,D_n(Y)=d_n\Big)= \int_{\log(d_1+\frac{d_2}{10}+\cdots+\frac{d_n}{10^{n-1}}) }^{\log(d_1+\frac{d_2}{10}+\cdots+\frac{d_n+1}{10^{n-1}} )} g^{\dagger}(x)\,dx~.
\end{equation}

\subsection{Scale-invariance, Base-invariance, and the mod 1 map}

Scale-invariance means that a collection of data does not change its digit distribution when multiplied by a constant. For example, suppose that the prices of goods are Benford distributed, then scale-invariance implies that these prices remain Benford distributed regardless of the currency in which they are converted.

\begin{lemma}
\label{l:scale-tr}
Scaling of a random variable $Y$ is equivalent to a translation of $X = \log Y$.
\end{lemma}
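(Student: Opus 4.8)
The plan is to unwind both sides through the logarithm and reduce the claim to the elementary identity $\log(cY) = \log c + \log Y$. First I would fix a scaling constant $c > 0$ and set $Y' = cY$. Writing $X' = \log Y'$, the addition formula for logarithms gives $X' = \log c + X$, so $X'$ is nothing but the random variable $X$ shifted by the real number $t := \log c$. At the level of densities this says that if $g$ is the density of $X$, then the density of $X'$ is $g(\,\cdot - t\,)$, which follows from the standard change-of-variables formula for a rigid translation (whose Jacobian is $1$).

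Conversely, I would start from a translation $X \mapsto X + t$ with $t \in \R$ and exponentiate: the corresponding variable on the $Y$-side is $10^{X+t} = 10^t \cdot 10^X = 10^t Y$, i.e. the scaling of $Y$ by the positive constant $c := 10^t$. Since $c \mapsto \log c$ is a bijection from $(0,\infty)$ onto $\R$ with inverse $t \mapsto 10^t$, these two constructions are mutual inverses, and the correspondence ``scale $Y$ by $c$'' $\longleftrightarrow$ ``translate $X$ by $\log c$'' is a bijection --- indeed a group isomorphism between the multiplicative group of positive reals acting on $Y$ and the additive group of reals acting on $X$. This is the precise content of the word ``equivalent'' in the statement.

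There is essentially no hard step here; the only point requiring a little care is the bookkeeping at the level of distributions rather than pointwise values --- one should check that the pushforward of the law of $X$ under $x \mapsto x + t$ is exactly the law of $\log(10^t Y)$, which is immediate from the definitions, and note that the argument never uses absolute continuity of $Y$ or $X$, so the lemma holds for arbitrary positive random variables. The payoff I would record immediately afterward is that combining this with Lemma \ref{l:one} and equation (\ref{e:five}) converts scale-invariance of the digit distribution of $Y$ into translation-invariance of $g^{\dagger}$ on $[0,1)$, i.e. invariance of a probability density on the circle under all rotations, whose only solution is the uniform one $g^{\dagger} = 1$.
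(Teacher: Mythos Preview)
Your proposal is correct and follows essentially the same approach as the paper: both reduce the claim to the elementary identity $\log(cY) = \log c + \log Y$, the paper phrasing it via cumulative distribution functions ($F_1(y) = G_1(\log y) = G(\log y - t) = F(y/10^t)$) while you work directly at the level of the random variables themselves. Your added remarks on the group-isomorphism structure and on dispensing with absolute continuity go slightly beyond the paper's brief argument but do not alter the route.
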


\begin{proof}
Let $X$ be a random variable with the density function $g$ and let $X_1$ be the random variable generated by the translation of $X$ by $t$ units, i.e., $X_1=X+t$ and $g_1(x)=g(x-t)$, where $g_1$ is the density function of $X_1$. Define $Y=10^X$ and $Y_1=10^{X_1}$ and let $f$ and $f_1$ be their corresponding density functions. In terms of cumulative distribution functions, we have
\begin{equation*}
F_1(y)  = G_1(\log y) = G(\log y - t)
	    = G(\log \frac{y}{10^t} )
	    = F(\frac{y}{10^t} )~.
\end{equation*}
The converse is immediate.
\end{proof}

Due to modular arithmetics, a translation of $g$ will result in a wrap-around effect in $g^{\dagger}$. Hence, scaling of $Y$ induces a wrap-around of $g^{\dagger}$. For example, let  $X =\log Y$  be the random variable with the density function $g = \textrm{Triangle}(0,\frac 3 2 ,3)$, and let $X_1$ be its translation by $t$. The effect of scaling of $Y$ on $g^{\dagger}$ is shown in figure (\ref{f:wrap}).
\begin{figure}[htbp]
\begin{center}
\includegraphics[width=0.6\textwidth]{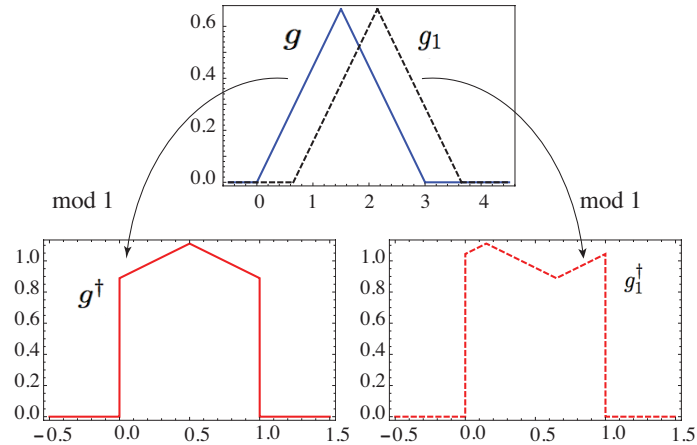}
\caption{The wrap-around effect on $g^{\dagger}$ of a translation by t=0.65 of the $g = \textrm{Triangle}(0,\frac 3 2 ,3)$.}
\label{f:wrap}
\end{center}
\end{figure}

\noindent
By Lemma (\ref{dagger}), we observe that $P\left(D_1(Y)=d\right) \ne P\left(D_1(Y_1)=d\right)$, i.e., the first digit distributions of $Y$ and $Y_1$ are not the same, indicating that $Y$ is not scale-invariant. One can see that only a uniform $g^{\dagger}$ remains unchanged under the wrap-around effect induced by a scaling of $Y$.

\begin{theorem}
\label{t:ben-scale}
Only the random variables characterized by $g^{\dagger} =1$ are scale-invariant.
\end{theorem}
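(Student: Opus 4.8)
The plan is to prove both implications of the claimed equivalence ($Y$ is scale-invariant if and only if $g^{\dagger}=1$). Write $g$ for the density of $X=\log Y$ and put $h:=g^{\dagger}$, regarded as a $1$-periodic function on $\R$ with $\int_0^1 h=1$. The first, routine step is to record how scaling acts on $h$: by Lemma \ref{l:scale-tr}, scaling $Y$ by $10^{t}$ replaces $g$ by $g(\cdot-t)$, and a one-line rearrangement of the defining sum shows that the mod $1$ reduction of $g(\cdot-t)$ is the circular shift $h_{t}$, where $h_{t}(x):=h(x-t)$ for the periodic extension. Thus, via Lemma \ref{l:one} (and its refinement \eqref{e:five}), scale-invariance of $Y$ is exactly the statement that, for every $t\in\R$, the integral of $h$ over each significant-digit interval equals the integral of $h_{t}$ over that interval.

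For the forward implication I would simply note that if $h\equiv 1$ then $h_{t}\equiv 1$ for all $t$, so by Lemma \ref{l:one} and \eqref{e:five} every significant-digit probability of $10^{t}Y$ equals the corresponding Benford value and is therefore independent of $t$; hence $Y$ is scale-invariant.

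For the converse, I would start from $\int_{\log d}^{\log(d+1)}h(x)\,dx=\int_{\log d-t}^{\log(d+1)-t}h(u)\,du$, valid for all $d\in\{1,\dots,9\}$ and all $t\in\R$ after substituting $u=x-t$. Fixing $d$, the right-hand side is independent of $t$; differentiating in $t$ (or, to stay within the measurable category, comparing values at Lebesgue points) gives $h(x)=h\!\left(x+\log\frac{d+1}{d}\right)$ for a.e.\ $x$. Hence the group $\Lambda\subseteq\R$ of periods of $h$ contains $1$ and $\log 2$. Because $\log_{10}2\notin\mathbb{Q}$, the subgroup generated by $1$ and $\log 2$ is dense in $\R$, so $\Lambda$ is dense; a $1$-periodic $L^{1}$ function with a dense group of periods is a.e.\ constant (its Fourier coefficients must satisfy $\widehat h(n)=e^{2\pi i n p}\widehat h(n)$ for every $p\in\Lambda$, forcing $\widehat h(n)=0$ for $n\neq 0$), and $\int_0^1 h=1$ then pins the constant to $1$, i.e.\ $g^{\dagger}=1$.

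The step I expect to be the main obstacle is precisely this last passage in the converse: upgrading ``the integrals over nine fixed intervals are shift-invariant'' to ``$h$ is constant''. It is tempting to overreach here; the honest route is to manufacture a dense period group (where the irrationality of $\log_{10}2$ is essential) and invoke a rigidity fact for circle translations --- ergodicity of an irrational rotation, equivalently the Fourier-vanishing argument above. A cleaner variant, which I might adopt instead, is to run the argument with \eqref{e:five} rather than Lemma \ref{l:one}: the intervals appearing there have endpoints dense in $[0,1)$ and generate the Borel $\sigma$-algebra, so shift-invariance of all those integrals passes, by a $\pi$--$\lambda$/uniqueness-of-measures argument, directly to $h=h_{t}$ a.e.\ for every $t$, after which constancy of $h$ is immediate.
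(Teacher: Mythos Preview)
Your proof is correct, and in fact it supplies precisely the content that the paper's own argument omits. The paper's proof amounts to the observation that scaling induces a circular shift of $g^{\dagger}$ (your first paragraph), notes that $g^{\dagger}\equiv 1$ is obviously shift-invariant (your forward implication), and then for the converse simply asserts that ``any other function when translated by an arbitrary amount, will not simultaneously keep the areas under $g^{\dagger}$ preserved over all of these intervals.'' No mechanism is given for why this assertion holds.

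Your converse is therefore a genuine upgrade rather than a different route to the same place. The paper never isolates a period such as $\log 2$, never invokes the irrationality of $\log_{10}2$, and never appeals to ergodicity, Fourier vanishing, or a $\pi$--$\lambda$ uniqueness argument; it treats the rigidity step as self-evident. What your approach buys is an honest proof of the only nontrivial direction, and your own diagnosis of ``the main obstacle'' is exactly the gap the paper leaves. Either of your two variants (dense period group from $1$ and $\log 2$, or the generating class of intervals from \eqref{e:five}) closes it cleanly; the first has the minor advantage of using only first-digit scale-invariance, matching the nine intervals the paper itself writes down.
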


\begin{proof}
By Lemma (\ref{l:scale-tr}), scaling of $Y$ yields a wrap-around effect of $g^{\dagger}$. For a Benford distribution, we want the same areas under the $g^{\dagger}$ and $g_1^{\dagger}$ over the intervals $[0, \log 2), [\log 2, \log 3),$
$\ldots,\, [\log 9, 1)$. Here $g_1^{\dagger}$ denotes the mod 1 projection of $g_1$ which is the shifted $g$.  Clearly, a uniform $g^{\dagger}$ will satisfy this constraint. Any other function when translated by an arbitrary amount, will not simultaneously keep the areas under $g^{\dagger}$  preserved over all of these intervals. Thus, the only $g^{\dagger}$ left unchanged under all translations is the uniform one.
\end{proof}

We are now arriving at two important results first proved by Hill \cite{Hill-1995} concerning scale- and base-invariance.

\begin{corollary}
Scale-invariance implies Benford's law.
\end{corollary}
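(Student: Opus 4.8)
The plan is to chain together the two results just established. If $Y$ is scale-invariant, then Theorem (\ref{t:ben-scale}) immediately gives that its associated mod 1 density satisfies $g^{\dagger}=1$ on $[0,1)$; the rest is just substituting this into the digit formula and reading off (\ref{e:benn}). So the proof is short, and the analytic content has already been front-loaded into Lemma (\ref{dagger}) and Theorem (\ref{t:ben-scale}).

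Concretely, I would fix $m\in\mathbb{N}$, $d_1\in\{1,\ldots,9\}$ and $d_i\in\{0,1,\ldots,9\}$ for $i>1$, and set $s=d_1+\tfrac{d_2}{10}+\cdots+\tfrac{d_m}{10^{m-1}}$. The generalized form of Lemma (\ref{dagger}), formula (\ref{e:five}), expresses $P(D_i(Y)=d_i \text{ for } i=1,\ldots,m)$ as the integral of $g^{\dagger}$ over the interval $[\log s,\ \log(s+10^{-(m-1)}))$. With $g^{\dagger}=1$ this integral is simply $\log(s+10^{-(m-1)})-\log s=\log\!\bigl(1+\tfrac{10^{-(m-1)}}{s}\bigr)$. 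Multiplying numerator and denominator inside the logarithm by $10^{m-1}$ rewrites this as $\log\!\bigl(1+(\sum_{j=1}^{m}10^{m-j}d_j)^{-1}\bigr)$, which is exactly (\ref{e:benn}); the case $m=1$ recovers (\ref{e:ben}). Hence $Y$ is Benford.

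I do not expect a genuine obstacle here: Theorem (\ref{t:ben-scale}) has already done the work of showing that scale-invariance forces a uniform $g^{\dagger}$, and Lemma (\ref{dagger}) has already packaged the passage from $g^{\dagger}$ to the digit probabilities. The one point worth spelling out is that Theorem (\ref{t:ben-scale}) determines $g^{\dagger}$ \emph{completely}, not merely to the extent needed to control the first digit; consequently, once $g^{\dagger}=1$, formula (\ref{e:five}) delivers the \emph{full} Benford law (\ref{e:benn}) for every $m$, and not just the first-digit statement (\ref{e:ben}). That remark is essentially the whole content of the corollary.
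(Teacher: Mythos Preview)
Your proposal is correct and follows the same two-step logic as the paper: invoke Theorem~(\ref{t:ben-scale}) to force $g^{\dagger}=1$, then feed this into Lemma~(\ref{dagger}) (or its extension (\ref{e:five})) to read off Benford's law. The only difference is cosmetic: the paper dispatches the corollary in two sentences, whereas you carry out the integral explicitly and emphasize that the full multi-digit law (\ref{e:benn}) follows, not merely the first-digit version.
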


\begin{proof}
In order to have scale-invariance, we must have $g^{\dagger} =1$. From Lemma (\ref{dagger}), one obtains Benford's law.
\end{proof}

Base-invariance means that for any base $b$, the probability of having $d$ as the first digit is
\begin{equation*}
P_b(D_1(Y)=d) = \log_b \left( 1 + 1/d \right),~~d=1,\ldots,b-1.
\end{equation*}

\begin{corollary}
Scale-invariance implies base-invariance.
\end{corollary}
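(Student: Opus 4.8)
The plan is to replay the entire mod 1 machinery of Section~\ref{mod1} with the base $10$ replaced by an arbitrary base $b$, and then invoke the rigidity that was already established in Theorem~\ref{t:ben-scale}. So the proof breaks into three moves: a base-$b$ version of Lemma~\ref{dagger}, a base-$b$ version of Lemma~\ref{l:scale-tr}, and a base-$b$ version of Theorem~\ref{t:ben-scale}.

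First I would record the base-$b$ analogue of Lemma~\ref{dagger}. Writing $X_b=\log_b Y$ for the associated random variable, with density $g_b$ and mod 1 projection $g_b^{\dagger}$, the proof of Lemma~\ref{dagger} goes through verbatim: the positive reals whose first base-$b$ digit is $d$ form the set $\bigcup_{k}[d\,b^k,(d+1)b^k)$, which $\log_b$ carries to $\bigcup_k[\log_b d+k,\log_b(d+1)+k)$, that is, to $[\log_b d,\log_b(d+1))$ modulo $1$; hence
\begin{equation*}
P_b\big(D_1(Y)=d\big)=\int_{\log_b d}^{\log_b(d+1)} g_b^{\dagger}(x)\,dx~.
\end{equation*}
Next I would record the base-$b$ analogue of Lemma~\ref{l:scale-tr}: since $\log_b(c\,Y)=\log_b c+\log_b Y$, scaling $Y$ by $c$ is exactly a translation of $X_b$ by $\log_b c$, and as $c$ runs over $(0,\infty)$ the shift $\log_b c$ runs over all of $\R$.

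The heart of the argument is then the base-$b$ analogue of Theorem~\ref{t:ben-scale}. Scale-invariance says $P_b(D_1(Y)=d)$ is unchanged under every scaling $Y\mapsto c\,Y$; combining the two facts above, this forces the masses of $g_b^{\dagger}$ on the base-$b$ digit intervals $[0,\log_b 2),\,[\log_b 2,\log_b 3),\dots,[\log_b(b-1),1)$ to be simultaneously preserved under every mod 1 rotation of $g_b^{\dagger}$. By the same rigidity argument used for Theorem~\ref{t:ben-scale} — a non-constant density on the circle cannot keep all of these finitely many interval masses fixed under a continuum of rotations — we get $g_b^{\dagger}\equiv 1$, and substituting back yields $P_b(D_1(Y)=d)=\log_b(d+1)-\log_b d=\log_b(1+1/d)$, which is base-invariance.

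The step I expect to demand the most care is feeding "scale-invariance'' into the base-$b$ picture. The hypothesis is phrased through the base $10$ significant-digit functions, whereas what the base-$b$ version of Theorem~\ref{t:ben-scale} actually consumes is invariance of the base-$b$ digit distribution under scaling. The clean way around this is to note that scaling by a constant is a base-free operation, so a genuinely scale-invariant variable is scale-invariant with respect to the digits in \emph{every} base at once, licensing the base-$b$ rerun of Theorem~\ref{t:ben-scale}; making this compatibility explicit, and checking that the rigidity in Theorem~\ref{t:ben-scale} really does squeeze constancy out of just the finitely many base-$b$ interval constraints, is the only part that is not a mechanical transcription of what is already in Section~\ref{mod1}.
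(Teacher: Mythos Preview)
Your proposal is correct and takes essentially the same approach as the paper. The paper's two-sentence proof cites Theorem~\ref{t:ben-scale} to force $g^{\dagger}=1$ and then integrates the uniform density over the base-$b$ logarithmic intervals $[\log_b d,\log_b(d+1))$; this tacitly presumes the base-$b$ rerun of Lemmas~\ref{dagger} and~\ref{l:scale-tr} and of Theorem~\ref{t:ben-scale} that you have spelled out explicitly, so your argument is a more carefully articulated version of the same idea rather than a different route.
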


\begin{proof}
Scale-invariance requires $g^{\dagger} =1$. Then, for an arbitrary base $b$, integrating over the logarithmic intervals $\left[\log_b d, \log_b (d+1)\right)$, we get Benford's distribution
\begin{equation}
  \label{e:base}
  P_b(D_1(Y)=d)=\log_b (1+1/d)~, \quad d\in\{1,\ldots,b-1\}~.
\end{equation}

\end{proof}

\noindent
Using the insight gained by the mod 1 map analysis, we can construct an infinite class of Benford distributed random variables with non-compact support. To this end, let us consider $X$ with the following density function
\begin{equation}
\label{e:non-c}
g(x)=\sum_{n=1}^{\infty}\frac{1}{2^n}\left[ H(x-n) - H(x-n-1) \right],
\end{equation}
as illustrated in Figure \ref{f:series}.
\begin{figure}[htbp]
\begin{center}
\includegraphics[width=0.45\textwidth]{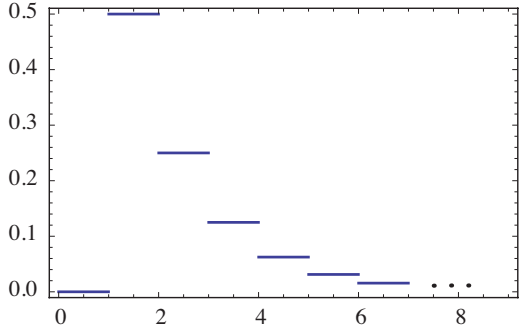}
\caption{Example of $g$ with non-compact support yielding uniform $g^{\dagger}$.}
\label{f:series}
\end{center}
\end{figure}
Here $H(x)$ is the Heaviside step function. Since the series $\sum_{n=1}^{\infty}\frac{1}{2^n}=1$, one obtains $g^{\dagger}=1$ and thus, $Y=10^X$ is Benford distributed. A similar construction can be used for any convergent series.

\section{$n$-digit Benford variables}\label{nben}

We define a random variable that has a logarithmic distribution in its first $n$ digits to be a $n$-digit Benford variable. More precisely, such a variable is Benford in the first $n$ digits, but it is not guaranteed to have a logarithmic distribution beyond the $n$-th digit.

\begin{definition}
Let $n\in\mathbb{N}$. A random variable is $n$-digit Benford if for all $d_1\in \{1,\ldots,9 \}$ and all $d_j\in \{0,1,\ldots,9 \}$, $2\le j\le n$,
\begin{equation}
\label{e:benf}
P\Big(D_1(Y) = d_1,\ldots,D_n(Y)=d_n\Big)=\log \bigg(1+ \frac{1}{10^{n-1} d_1+10^{n-2} d_2 + \cdots + d_n} \bigg).
\end{equation}
\end{definition}

To show that such variables exist, we use (\ref{e:five}) to construct a $g^{\dagger}$ that satisfies (\ref{e:benf}) for any $n\in\mathbb{N}$. That is, we search for $g^{\dagger}$ such that
\begin{equation}
  \label{e:star}
  \int_{\log(d_1+\frac{d_2}{10}+\cdots+\frac{d_n}{10^n} )}^{\log(d_1+\frac{d_2}{10}+\cdots+\frac{d_n+1}{10^n} )} g^{\dagger}(x)\,dx=\log \bigg(1+ \frac{1}{10^{n-1} d_1+10^{n-2} d_2 + \cdots + d_n} \bigg).
\end{equation}

\noindent
We proceed by the partition $\{\log 1, \log 2, \ldots, \log 10\}$ of the $[0,1)$ interval. Then an example of $g^{\dagger}$ which yields a $1$-digit Benford distribution is given by
\begin{equation}
\label{e:non-u}
g^{\dagger}(x) = \begin{cases}
\frac{\pi}{2}   \sin \left(\pi \,\frac{ x-\log k}{\log (1 + 1/k)}\right), \quad & \log k <x \le \log (k+1) ~,~~k=1,\cdots,9,\\
0, ~&\textrm{otherwise}
\end{cases}
\end{equation}
as illustrated in Figure \ref{f:sine}.
\begin{figure}[htbp]
\begin{center}
\includegraphics[width=0.45\textwidth]{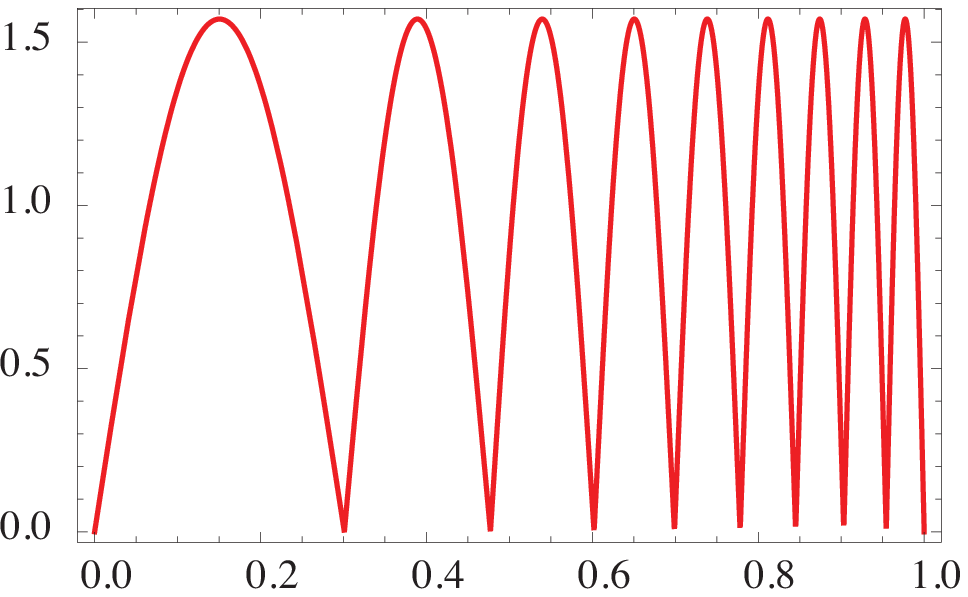}
\caption{Example of $g^{\dagger}$ yielding a $1$-digit Benford random variable.}
\label{f:sine}
\end{center}
\end{figure}
One can readily check that $\int_{\log(d)}^{\log(d+1)} g^{\dagger}(x) \, dx =\log(1+1/d)$.

\noindent
By Theorem (\ref{t:ben-scale}) any density function $g$ whose $g^{\dagger}$ is non-uniform is not scale-invariant. Thus the example of equation (\ref{e:non-u}) while yielding a $1$-digit Benford distributed variable, is not scale- nor base-invariant.

Following this idea, one can generalize the example given in (\ref{e:non-u}) in order to produce a $g^{\dagger}$ which satisfy (\ref{e:star}) with an arbitrary large $n$. For such a construction, let
\begin{equation*}
    0 = a_0 < a_1 < \cdots < a_{m-1} < a_m = 1~,~~ m = 10^n
\end{equation*}
be a partition of the interval $[0,1)$. Let $h_j : [0,1] \to [0,\infty)$ be a probability density function. For $x \in [0,1)$ such that $a_j \le x < a_{j+1}$, let us define
\begin{equation}
  \label{e:hj}
  g^{\dagger}=h_j\left(\frac{x-a_j}{a_{j+1}-a_j}\right)~.
\end{equation}
It is easy to see that for all $j$,
\begin{equation*}
  \int_{a_j}^{a_{j+1}} g^{\dagger}(x)\,dx = a_{j+1}-a_j~.
\end{equation*}
Therefore, we have that $g^{\dagger}$ behaves as the uniform density ($g(x)=1$ for all $0 \le x <1$) when integrated over intervals of the form $[a_k,a_{\ell})$ with $0\le k < \ell \le n$.

\section{Numerical modeling of a 1-digit Benford variable}
Using Mathematica \cite{Mathematica9}, we modeled a 1-digit Benford random variable by generating its discrete approximation with $100,000$ data points. As a concrete example we took $Y=10^X$ where $X$ has the density function $g(x)$ given by (\ref{e:non-u}). From $f(y)=g(\log y)/(y\ln 10)$, we obtain the density function for $Y$:
\begin{equation}
\label{e:f-non-u}
f(y) = \begin{cases}
\frac{\pi }{2 y \ln 10}   \sin \left(\frac{\pi \log \frac y k}{\log (1 + \frac 1 k)}\right), \quad &  k <y \le k+1 ~,~~k=1,\cdots,9,\\
0, ~&\textrm{otherwise .}
\end{cases}
\end{equation}
The 100-bin histogram of these data points is shown in Figure \ref{f:subfig1} along with the graph of the density function $f$ represented by dashed lines. The Mathematica generated data closely follow the actual curve of the density function, as we can see in \ref{f:subfig1}. In Figure \ref{f:subfig2}, we show the histogram of the first digit distribution of the experimental points versus the theoretical Benford probabilities.  The numerical results are recorded in table \ref{t:data}. Note the accuracy of the Mathematica generated numbers.

\begin{figure}[htb]
	\centering
	\subfigure[Density function $f(y)$ and a histogram of $100,000$ Mathematica generated numbers approximating the distribution ]{
		\includegraphics[scale =0.65] {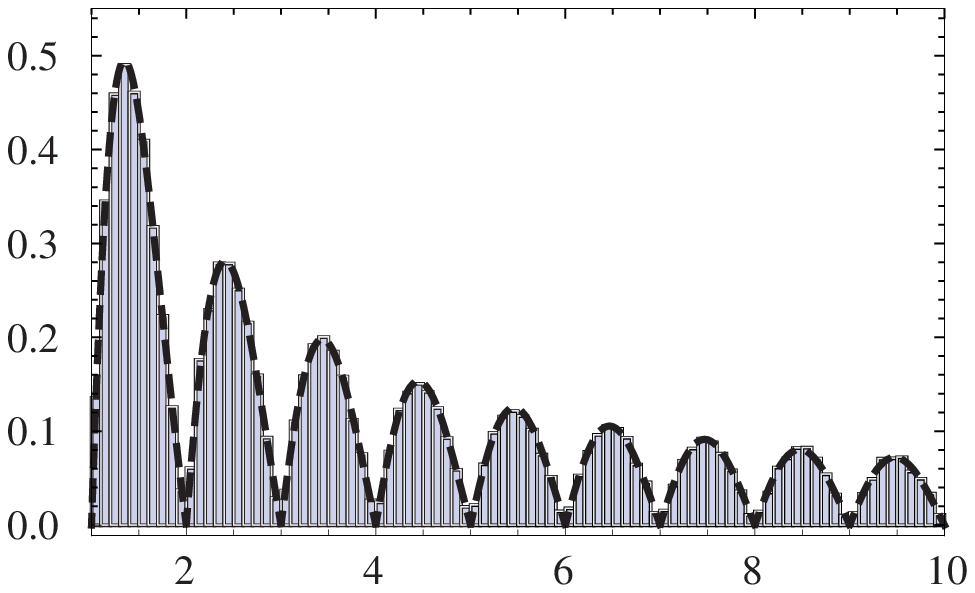}
	    \label{f:subfig1}
	}\qquad
	\subfigure[Histogram of first digits of $Y$ vs. the Benford distribution ]{
		\includegraphics[scale =0.65] {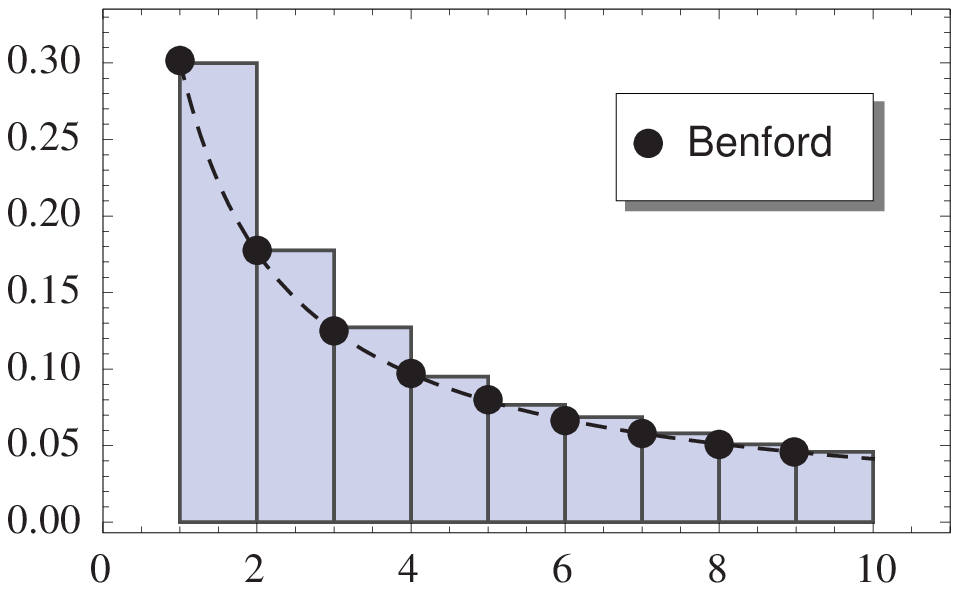}
	    \label{f:subfig2}
	}
	\caption[Optional caption for list of figures]{Example of 1-digit Benford distribution and its discrete numerical approximation along with its first digit distribution histogram.}
	\label{fig:new}
	\end{figure}

\begin{table}[htb]
{\small
\begin{tabular}{|c|c|c|c|c|c|c|c|c|c|}
  \hline
  $d$    & 1 & 2 & 3 & 4 & 5 & 6 & 7 & 8 & 9 \\ \hline
  Theoretical  & 0.3010 & 0.1761 & 0.1249 & 0.0969 & 0.0792 & 0.0669 & 0.0580 & 0.0512 & 0.0458 \\
  Mathematica & 0.3006 & 0.1775 & 0.1258 & 0.0957 & 0.0783 & 0.0671 & 0.0572 & 0.0516 & 0.0463 \\
  \hline
 \end{tabular}
} \vskip 7pt
 \caption{Theoretical vs. experimental (Mathematica generated) probabilities  for the 1-digit Benford random variable (\ref{e:f-non-u})}
  \label{t:data}
\end{table}

\section{Conclusions}

In this paper, we demonstrated the importance of mod 1 map in analyzing the digit distribution of random variables. In particular, we have shown that a uniform $g^{\dagger}$ implies scale- and base-invariance. We have introduced the concept of $n$-digit Benford, and gave a concrete example of a 1-digit Benford random variable. Furthermore, we have shown how to use the mod 1 map to construct a $n$-digit Benford variable starting with any density function.  We generated, with the help of the computer algebra system Mathematica, a $100,000$ data points discrete approximation of a 1-digit Benford variable, and found close agreement between the first digit probabilities of the model against the expected theoretical probabilities.

\section*{Acknowledgements} We are grateful to the anonymous referee for the valuable and constructive suggestions that helped improve this work. We also would like to thank Prof. Berger for very useful discussions.

\end{document}